\newcommand{\nm}{\noalign{\smallskip}}
\newcommand{\ds}{\displaystyle}
\newcommand{\D}{D}
\newcommand{\Dx}{D}
\newcommand{\beq}{\begin{equation}}
\newcommand{\eeq}[1]{\label{#1}\end{equation}}
\newcommand{\Beq}{\begin{equation} \left\{ \begin{array}{rcll}}
\newcommand{\Eeq}[1]{ \end{array} \right.\label{#1}\end{equation}}
\newcommand{\intome}{\int_{\underline \omega}^{\overline \omega}}
\newcommand{\Div}{\nabla \cdot}
\newcommand{\admitt}{\sigma + i \omega \varepsilon}
\newcommand{\admittj}{\sigma + i \omega_j \varepsilon}
\newcommand{\admittO}{\sigma + i \omega_0 \varepsilon}
\newtheorem{Lemma}{Lemma}[section]
\newtheorem{Proposition}{Proposition}[section]
\newtheorem{Corollary}{Corollary}[section]
\newtheorem{Theorem}{Theorem}[section]
\newtheorem{Definition}{Definition}[section]
\numberwithin{equation}{section} \numberwithin{figure}{section}
\title{Admittivity imaging from multi-frequency micro-electrical impedance tomography\thanks{\footnotesize This work was supported  by the
ERC Advanced Grant Project MULTIMOD--267184.}}
\author{Habib Ammari\thanks{\footnotesize Department of Mathematics and Applications,
Ecole Normale Sup\'erieure, 45 Rue d'Ulm, 75005 Paris, France
(habib.ammari@ens.fr, laure.giovangigli@ens.fr).} \and Laure Giovangigli\footnotemark[2] \and
Loc Hoang Nguyen\thanks{Mathematics Section, \'Ecole Polytechnique F\'ed\'erale de Lausanne,
Station 8,  CH-1015 Lausanne, Switzerland (loc.nguyen@epfl.ch).} \and Jin-Keun
Seo\thanks{\footnotesize Department of Computational Science and
Engineering, Yonsei University 50 Yonsei-Ro, Seodaemun-Gu, Seoul
120-749, Korea (seoj@yonsei.ac.kr).}}
\begin{document}

\maketitle


%

\begin{abstract}
The aim of this paper is to propose an optimal control optimization algorithm for reconstructing admittivity distributions (i.e., both conductivity and permittivity) from multi-frequency micro-electrical impedance tomography.  A convergent and stable optimization scheme is shown  to be obtainable from multi-frequency data.
The results of this paper have potential  applicability in cancer
imaging, cell culturing and differentiation, food sciences, and biotechnology.
\end{abstract}

\bigskip

\noindent {\footnotesize Mathematics Subject Classification
(MSC2000): 35R30, 35B30.}

\noindent {\footnotesize Keywords: hybrid imaging, micro-electrical impedance tomography,  multi-frequency
measurements, reconstruction, optimal control, Landweber algorithm, stability.}

\maketitle



\selectlanguage{english}

\section{Introduction}
This paper aims at proposing and analyzing an optimal control approach for imaging the admittivity distributions of biological tissues.

Biological tissues possess characteristic distributions of permittivity and  conductivity \cite{bio3}. Conductivity can be regarded as a measure of the ability to transport charge throughout material's volume under an applied electric field, while permittivity is a measure of the ability of the dipoles within a material to rotate (or of the material to store charge) under an applied external field.
At low frequencies, biological tissues behave like a conductor, but capacitive effects become important at higher frequencies due to the membranous structures \cite{schwan3, jinkeun}. The electric behavior of a biological tissue under the influence of
an electric field at frequency $\omega$ can be characterized by its frequency-dependent admittivity $\sigma + i \omega \varepsilon$, where $\sigma$ and $\varepsilon$ are respectively its conductivity and permittivity.  As recently shown in \cite{laure, jinkeun}, spectroscopic admittivity imaging can provide information about the microscopic structure of a medium, from which physiological or pathological conditions of tissue can be derived, because the admittivity of biological tissue varies with its composition, membrane characteristics, intra-and extra-cellular fluids, and other factors.

In this paper, we consider the imaging of admittivity distributions of biological tissues from multi-frequency micro-electrical impedance data.  Micro-electrical impedance tomography  \cite{microeit, pnas} can be used to reconstruct a high resolution admittivity distribution from internal measurements of  electrical potential at multiple frequencies.
 The technique uses  planar arrays of micro-electrodes  to nondestructively sense thin layers of biological samples \cite{microeit2, microeit, microeit3, microeit4, microeit27}. It has potential applications in cell electrofusion and electroporation, cell culturing, cell differentiation and drug screening; see \cite{food, microeit,  keese3, keese4, biotech, keese5, blood,culture,food2}. It is capable of high-resolution  imaging. Other methods of electrical tissue property imaging using internal data are investigated in \cite{AMMARI-08,  AMMARI-BONNETIER-CAPDEBOSCQ-08, siap2011, ejam,  pol, GEBAUER-SCHERZER-08, sirev, seobook, otmar2}.  Resolution and stability enhancements are achieved from internal measurements \cite{ip12,AmmariWenjiaLoc:arXiv12010619v2, AmmariLocLaurentetal:preprint2012}.

To solve the admittivity imaging problem from multi-frequency micro-electrical data, we propose an optimal control optimization algorithm  and rigorously prove its stability and convergence properties.
 Internal potential measurements at a single frequency are known to be insufficient for  reconstructing the admittivity distribution. An initial guess is constructed by solving a boundary value problem. Note that the method of characteristics \cite{KwonLeeYoon:ip2002} can not be used  to solve the transport equation satisfied by the logarithm of the admittivity, because the electrical potential is a complex valued function. It is unlikely that a direct (noniterative) method can be designed for solving the admittivity imaging problem.
As far as we know, the approach in this paper and the analysis of its convergence and stability have not been reported elsewhere. The convergence result is among the very few algorithms for reconstructing the electrical properties of tissue from internal data.

To formulate  mathematically the imaging problem, we consider a medium of conductivity $\sigma$ and permittivity $\varepsilon$ occupying $\Omega$, $\mathcal{C}^2$-domain of $\mathbb{R}^2$.  (Hereafter, the medium is simply called $\Omega$.)
The problem of micro--electrical impedance tomography is to reconstruct $\sigma$ and $\varepsilon$   from the vector of potential $u_{\omega}$, $\omega \in (\underline \omega, \overline \omega)$, the solution of
\Beq
	\Div (\admitt) \nabla u_{\omega} &=& 0 &\mbox{in } \Omega,\\
	u_{\omega} &=& \varphi &\mbox{on } \partial \Omega,
\Eeq{eq:u}
where $\varphi=(\varphi_1,\varphi_2)\in H^{1/2}(\partial\Omega)^2$.
It is proved in this paper that the above inverse problem is stably solvable with a good choice of boundary datum $\varphi$; that is, $\varphi$ belongs to what we will refer to as the proper set of boundary measurements; see \cite{siap2011, sirev, Triki:ip2010}.

The paper is organized as follows. First, in section \ref{sect2} we review some useful regularity results for elliptic systems of partial differential equations. In section \ref{sec:proper set} we introduce the set of proper boundary measurements. Section \ref{sect4} is devoted to the reconstruction method. We prove that the minimization functional is Fr\'echet differentiable and we compute its derivative. Then we construct an initial guess and prove the convergence of a minimizing sequence. The paper ends with a short discussion.    In the appendix, we prove the convergence of Landweber sequences with cutoff functions.

\section{Preliminaries on regularities} \label{sect2}
Let $\Omega'=\{x\in \Omega~:~ \mbox{dist}(x,\partial\Omega) > c_0\}$ for a small constant $c_0>0$. We assume that $\sigma$ and $\varepsilon$ are constant and known in $\Omega\setminus \Omega'$.  In the following, we let $\sigma_*$ and $\varepsilon_*$, the true conductivity and permittivity of $\Omega$, belong to the convex subset of $H^2(\Omega)^2$ given by
$$ \widetilde{\mathcal{S}} = \{(\sigma, \varepsilon) := (\sigma_0, \varepsilon_0) + (\eta_1, \eta_2) | \,(q_1, q_2) \in \mathcal{S}\},$$
where the positive constants $\sigma_0$ and $\varepsilon_0$ are respectively the conductivity and permittivity
in $\Omega\setminus\Omega'$ and
\begin{equation}\label{eq:mathcalS}
\begin{array}{ll}
\mathcal{S} =& \{(\eta_1,\eta_2)\in  H^2_0(\Omega)^2|~  c_1< \eta_1 + \sigma_0 < c_2,~
  c_1< \eta_2 + \epsilon_0 < c_2,~\textrm{supp}~\eta_j \subset \Omega',\\
&\quad\quad\quad\quad\quad\quad\quad\quad\| \eta_j\|_{H^2(\Omega)} \le c_3\|\eta_j\|_{H^1(\Omega)}, ~\|\eta_j\|_{H^1(\Omega)}\le c_4 ~~\mbox{for }j=1,2~ \}
\end{array}
\end{equation}
with $c_1, c_2, c_4$ and $c_4$ being positive constants and $\textrm{supp}$ denoting the support. In other words, we can write
$
\widetilde{\mathcal{S}} = (\sigma_0, \varepsilon_0) + \mathcal{S}.$
Here, the condition of $\| \eta_j\|_{H^2(\Omega)} \le c_3\|\eta_j\|_{H^1(\Omega)}$ is used to exclude any micro-local oscillation on the admittivity distribution.

Introducing an open subset of $\mathbb{C}$
\begin{equation} \label{defO}
\mathcal{O} := \left\{o \in \mathbb{C} | \Im m \, o  < \frac{c_1}{2 c_2} \right\},
\end{equation}
we first establish a useful lemma, which is a direct consequence of  standard regularity results.
\begin{Lemma}\label{eqn 2.1}
Let $(\sigma,\varepsilon)\in \widetilde{\mathcal{S}}$, $\omega \in \mathcal{O},$ and $f\in L^p(\Omega)$ for $2\le p < \infty$. If $v\in H^1(\Omega)$ satisfy
\begin{equation}
 \Div (\admitt) \nabla v =  f ~~~\mbox{in }~~ \Omega,
\end{equation}
then $v\in W^{2,p}(\Omega')$ and
\beq
	\|v\|_{W^{2,p}(\Omega')} ~~\leq ~~C~ (\|v\|_{L^p(\Omega)} + \|f\|_{L^p(\Omega)}),
\eeq{H2 regularity}
where $C$ depends only on $c_i,i=0, \ldots, 4$, $p$, and  $\Omega.$  Moreover, if $v=0$ on $\partial \Omega$, then
\beq
	\|v\|_{W^{2,p}(\Omega)} ~~\leq ~~C~ (\|v\|_{L^p(\Omega)} + \|f\|_{L^p(\Omega)}).
\eeq{H2 regularity0}
\label{Lemma H2}
\end{Lemma}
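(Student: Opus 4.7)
The strategy is to reduce the complex scalar PDE to a real $2\times 2$ elliptic system and then invoke classical interior and global $W^{2,p}$ estimates. Writing $\omega=\omega_1+i\omega_2$ with $\omega_2<c_1/(2c_2)$ by the definition of $\mathcal{O}$, and decomposing $v=v_1+iv_2$, $f=f_1+if_2$, the real and imaginary parts of $\nabla\cdot((\sigma+i\omega\varepsilon)\nabla v)=f$ yield
\begin{equation*}
\nabla\cdot(A\nabla v_1)-\nabla\cdot(B\nabla v_2)=f_1,\qquad \nabla\cdot(B\nabla v_1)+\nabla\cdot(A\nabla v_2)=f_2,
\end{equation*}
where $A=\sigma-\omega_2\varepsilon$ and $B=\omega_1\varepsilon$. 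The pointwise bound $A\ge c_1-(c_1/(2c_2))c_2=c_1/2$ coming from the definitions of $\widetilde{\mathcal{S}}$ and $\mathcal{O}$ shows that this system is uniformly strongly elliptic (its principal symbol is a rotation-plus-dilation matrix of smallest eigenvalue $\ge A|\xi|^2 \ge (c_1/2)|\xi|^2$), with ellipticity constant depending only on $c_1,c_2$.

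Next I would check that the coefficients $A,B$ are regular enough for classical $L^p$ theory. Since $(\sigma,\varepsilon)\in H^2(\Omega)^2$ and $\Omega\subset\mathbb{R}^2$, the Sobolev embedding yields $A,B\in W^{1,q}(\Omega)\hookrightarrow C^{0,\alpha}(\overline{\Omega})$ for every $q<\infty$ and every $\alpha<1$, with norms controlled by $c_3,c_4$. This is the standard setting in which the Agmon--Douglis--Nirenberg interior $W^{2,p}$ estimates (or, applied componentwise, the scalar Calder\'on--Zygmund estimates in Gilbarg--Trudinger, Chapter 9) hold for elliptic systems. Applied on a cushion $\Omega''$ with $\Omega'\Subset\Omega''\Subset\Omega$ chosen so that $\mathrm{dist}(\Omega',\partial\Omega'')\ge c_0/2$, this gives
\begin{equation*}
\|(v_1,v_2)\|_{W^{2,p}(\Omega')}\le C\bigl(\|(v_1,v_2)\|_{L^p(\Omega'')}+\|(f_1,f_2)\|_{L^p(\Omega'')}\bigr),
\end{equation*}
with $C=C(c_0,\ldots,c_4,p,\Omega)$; recombining real and imaginary parts yields \eqref{H2 regularity}.

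For the global bound \eqref{H2 regularity0}, I would exploit the fact that, by construction of $\widetilde{\mathcal{S}}$, the perturbations $\eta_j$ are supported in $\Omega'$, so $A\equiv\sigma_0-\omega_2\varepsilon_0$ and $B\equiv\omega_1\varepsilon_0$ are \emph{constant} on a neighborhood of $\partial\Omega$. Combined with the $C^2$ regularity of $\partial\Omega$ and the homogeneous Dirichlet datum $v=0$ on $\partial\Omega$, the global $W^{2,p}$ estimate for elliptic systems with continuous coefficients on a $C^2$ domain (Gilbarg--Trudinger Theorem 9.13/9.15 applied to each component, or the ADN global version) gives the required inequality on all of $\Omega$. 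The main technical point to verify is that one can genuinely cite the vector-valued $L^p$ regularity for our strongly elliptic system with only $C^{0,\alpha}$ (rather than Lipschitz) coefficients; this is classical for our specific system since the off-diagonal coupling involves only gradients of known coefficients and a componentwise iteration reduces everything to scalar equations with continuous leading coefficients, for which Calder\'on--Zygmund theory applies. Everything else is bookkeeping of constants.
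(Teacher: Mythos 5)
Your reduction to a real $2\times2$ system and the ellipticity bound $A=\sigma-\Im(\omega)\,\varepsilon\ge c_1/2$ are correct, and your treatment of the global estimate (coefficients constant near $\partial\Omega$, $C^2$ boundary, homogeneous Dirichlet data) is reasonable. The gap is in the central step: you invoke interior $W^{2,p}$ (ADN/Calder\'on--Zygmund) estimates for a \emph{divergence-form} system on the sole ground that the leading coefficients $A,B$ are H\"older continuous. That is not what the classical theory gives. The $W^{2,p}$ estimates are stated for non-divergence operators $a^{ij}D_{ij}$; to use them you must expand $\nabla\cdot(A\nabla v_1)=A\Delta v_1+\nabla A\cdot\nabla v_1$ (and similarly for the other terms), and the first-order remainders $\nabla A\cdot\nabla v_j$, $\nabla B\cdot\nabla v_j$ must be shown to lie in $L^p$. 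Since $\nabla A,\nabla B$ belong only to $H^1(\Omega)\hookrightarrow L^q$ for $q<\infty$ (not to $L^\infty$), and at the outset $\nabla v$ is only known to be in $L^2$, these products are not obviously in $L^p$; with $C^{0,\alpha}$ coefficients and divergence structure alone one obtains Meyers/De Giorgi or Schauder-type conclusions, not $W^{2,p}$. Your closing remark that a ``componentwise iteration reduces everything to scalar equations with continuous leading coefficients'' does not repair this: the off-diagonal coupling $-\nabla\cdot(B\nabla v_2)$ is a full second-order operator in $v_2$, not merely a gradient of known coefficients.

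The missing ingredient is precisely the bootstrap the paper performs. First establish the interior $H^2$ bound $\|v\|_{H^2(\Omega'')}\le C(\|v\|_{L^2(\Omega)}+\|f\|_{L^2(\Omega)})$ by standard $L^2$ theory; in two dimensions this gives $\nabla v\in L^{2p}(\Omega'')$ by Sobolev embedding, and since $\nabla(\sigma+i\omega\varepsilon)\in H^1\hookrightarrow L^{2p}$ as well (and is supported in $\Omega'$), the Cauchy--Schwarz inequality places the first-order term in $L^p$. Only then can one apply the $W^{2,p}$ theory to the principal part --- the paper in fact simply divides by the admittivity and uses the Calder\'on--Zygmund estimate for the Laplacian, which dispenses with the system formulation entirely. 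If you insert this two-step argument your proof closes; without it, the key inequality $\|(v_1,v_2)\|_{W^{2,p}(\Omega')}\le C\bigl(\|(v_1,v_2)\|_{L^p(\Omega'')}+\|(f_1,f_2)\|_{L^p(\Omega'')}\bigr)$ is asserted rather than proved.
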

\begin{proof}
	From the standard regularity estimate, we have
\beq
	\|v\|_{H^2(\Omega')}~~ \leq ~~C ~(\|f\|_{L^2(\Omega)^2} + \|v\|_{L^2(\Omega)} ).
\eeq{Lax Milgram}
The first equation in \eqref{eqn 2.1} can be rewritten as
\beq
	\Delta v = -\nabla v^T \frac{\nabla (\admitt)}{\admitt} + \frac{f}{\admitt},
\eeq{2.4b} where  $T$ denotes the transpose.
Since $\mbox{supp}~\nabla (\admitt)\subset\Omega'$, we have
 \begin{eqnarray*}
 \|\nabla v^T \frac{\nabla (\admitt)}{\admitt}\|_{L^p(\Omega)} & =& \|\nabla v^T \frac{\nabla (\admitt)}{\admitt}\|_{L^p(\Omega')}\\
&\le & C\|\nabla v^T\|_{L^{2p}(\Omega')^{2}} \| \frac{\nabla (\admitt)}{\admitt}\|_{L^{2p}(\Omega')^2}\\
&\le & C \| v\|_{H^{2}(\Omega')} \| \admitt\|_{H^{2}(\Omega')}\\
&\le & C \left( \| v\|_{L^2(\Omega')}  +  \|f\|_{L^2(\Omega)}\right)  \| \admitt\|_{H^{2}(\Omega')}.
\end{eqnarray*}
Here, Schwartz inequality was used for the second inequality; Sobolev embedding for the third inequality; and the last inequality comes from (\ref{Lax Milgram}).
Hence, the right side of (\ref{2.4b}) is in $L^p(\Omega)$.
Now, we apply the standard $W^{2,p}$-estimate for Poisson's equation (\ref{2.4b}) to get
\begin{eqnarray*}
	\|v\|_{W^{2,p}(\Omega')} &\leq &C
\left( \| v\|_{L^{p}(\Omega)^{2}}  +  \|f\|_{L^p(\Omega)}\right).
\end{eqnarray*}
\end{proof}

\section{Sets of proper boundary conditions} \label{sec:proper set}

The main purpose of this section is to choose ``good" boundary datum $\varphi$ in \eqref{eq:u} so that the measurements of the corresponding vector potential $u_{\omega}$ are helpful in our reconstruction algorithm.  Such a set of good functions, henceforth coined as \textit{a set of proper boundary conditions}, is defined as follows.

\begin{Definition}\label{defi:properbc}
 Let $\varphi \in H^{1/2}(\partial \Omega)^2$. We say that
  $\varphi$ is a proper set of boundary conditions if and only if the $2 \times 2$ matrix $\nabla u_\sigma$ is invertible in
      $\Omega$ for all $\sigma\in \sigma_0+\mathcal S$ where the vector $u_\sigma$ denotes the solution of the boundary value problem
\begin{equation*}
    \left\{
         \begin{array}{ll}
               \vspace{0.2cm} \nabla \cdot \sigma\nabla u = 0 &\hspace{1cm}\mbox{in } \Omega,\\
                 u = \varphi &\hspace{1cm}\mbox{on } \partial \Omega.
            \end{array}
    \right.
\end{equation*}
\end{Definition}
The existence of a set of proper boundary conditions was proved in
\cite{aless, bauman, jinkeun2}.

The following proposition is the main result of this section.
\begin{Proposition}\label{prop:multi}
     For all $(\sigma, \varepsilon) \in \widetilde{\mathcal{S}}$, we denote by $u_{\omega}$ the solution of (\ref{eq:u}) with $\varphi$ being a proper set of boundary conditions. There exist $N>1$ open pairwise disjoint open subsets $B_1, B_2, \cdots, B_N$ of $\Omega$, and $N$ frequencies $\omega_1, \cdots, \omega_N \in (\underline \omega, \overline \omega)$ such that
\begin{itemize}
    \item[(i)] $\overline{\Omega'} \subset \displaystyle\cup_{j = 1}^N \overline B_j  \subset \Omega$;
    \item[(ii)] The matrix $A_{\omega_j}(x) = \nabla u_{\omega}$ is invertible for all $x \in B_j$.
\end{itemize}
\label{Pro Invertibility of A}
\end{Proposition}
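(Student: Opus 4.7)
The plan is to exploit the holomorphic dependence of $u_\omega$ on $\omega \in \mathcal{O}$: properness of $\varphi$ is naturally a statement at $\omega = 0$ (where the equation is real), and I propagate it to real $\omega \in (\underline{\omega}, \overline{\omega})$ via the identity theorem, then pass from pointwise invertibility to local invertibility by continuity and cover $\overline{\Omega'}$ by disjoint open tiles.

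First I check that $\omega \mapsto u_\omega$ extends holomorphically to all of $\mathcal{O}$. For $\omega \in \mathcal{O}$ one has $\Re(\sigma + i\omega\varepsilon) = \sigma - (\Im \omega)\varepsilon \geq c_1 - (c_1/(2c_2))\,c_2 = c_1/2 > 0$, so the sesquilinear form associated with \eqref{eq:u} is coercive uniformly on bounded subsets of $\mathcal{O}$; the equation thus admits a unique solution $u_\omega \in H^1(\Omega)^2$ for every $\omega \in \mathcal{O}$. Since $\sigma + i\omega\varepsilon$ depends affinely on $\omega$, a standard Neumann-series / implicit-function argument applied to the variational formulation yields that $\omega \mapsto u_\omega$ is holomorphic from $\mathcal{O}$ into $H^1(\Omega)^2$; combining this with Lemma~\ref{eqn 2.1} and the Sobolev embedding $W^{2,p}(\Omega') \hookrightarrow C^1(\overline{\Omega'})$ for some $p > 2$, the map is also holomorphic into $C^1(\overline{\Omega'})^2$.

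Now fix $x \in \overline{\Omega'}$ and set $F_x(\omega) := \det(\nabla u_\omega(x))$. Then $F_x$ is holomorphic on the connected half-plane $\mathcal{O}$. At $\omega = 0$ the complex equation reduces to $\nabla \cdot \sigma \nabla u_0 = 0$ with $u_0 = \varphi$ on $\partial \Omega$, so $u_0$ coincides with the $u_\sigma$ of Definition~\ref{defi:properbc} and properness forces $F_x(0) \neq 0$. The identity theorem therefore makes the zero set of $F_x$ in $\mathcal{O}$ discrete, hence nowhere dense in the real interval $(\underline{\omega}, \overline{\omega}) \subset \mathcal{O}$, and I can pick $\omega(x) \in (\underline{\omega}, \overline{\omega})$ with $F_x(\omega(x)) \neq 0$. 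By continuity of $y \mapsto \det(\nabla u_{\omega(x)}(y))$, the set $U(x) := \{y \in \Omega : \det(\nabla u_{\omega(x)}(y)) \neq 0\}$ is an open neighborhood of $x$.

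By compactness of $\overline{\Omega'}$, I extract a finite subcover $U(x_1), \ldots, U(x_M)$, pick $\rho > 0$ smaller than both its Lebesgue number (relative to $\overline{\Omega'}$) and $\mathrm{dist}(\overline{\Omega'}, \partial\Omega)$, and tile $\mathbb{R}^2$ by open axis-aligned squares of side $\rho/2$. Taking as $B_1, \ldots, B_N$ the interiors of those squares whose closures meet $\overline{\Omega'}$ yields finitely many pairwise disjoint open sets with $\overline{\Omega'} \subset \bigcup_j \overline{B_j} \subset \Omega$; each $\overline{B_j}$ lies in some $U(x_{k(j)})$, so setting $\omega_j := \omega(x_{k(j)})$ completes the construction. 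The main obstacle I expect is a clean execution of the holomorphic-dependence step: without it one only has continuous dependence on $\omega$, which cannot rule out the pathology where $F_x$ vanishes identically on $(\underline{\omega}, \overline{\omega})$ despite $F_x(0) \neq 0$; it is precisely the rigidity of holomorphic functions on connected domains (isolated zeros) that supplies the required dichotomy, while the remaining steps are routine regularity and a mild compactness/tiling argument.
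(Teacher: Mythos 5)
Your proposal is correct and follows essentially the same route as the paper: regularity plus Sobolev embedding to get pointwise $C^1$ values, analyticity of $\omega \mapsto u_\omega$ on $\mathcal{O}$, the identity theorem anchored at $\omega = 0$ where properness of $\varphi$ forces $\det \nabla u_0(x) \neq 0$, then continuity in $x$ and compactness of $\overline{\Omega'}$. The only differences are cosmetic: you sketch holomorphy via a Neumann-series argument where the paper proves it as a separate lemma by difference quotients, and your tiling step actually handles the pairwise-disjointness of the $B_j$ more carefully than the paper's cover by balls $B_{r_x}(x)$ does.
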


In \cite{Alberti}, G. Alberti has proved  the result when the dependence of coefficients on the frequency is different  from that in our context. The key of his arguments is the fact that $u_{\omega}$ is analytic with respect to $\omega.$ Fortunately, his technique is still applicable to \eqref{eq:u}. We present the proof here for the completeness' sake.

\begin{Lemma}\label{prop:analycity}
Let $\mathcal{O}$ be defined by (\ref{defO}). The map
\begin{equation*}
\begin{array}{lclc}
    \vspace{0.2cm} L : &\mathcal{O}&\rightarrow& H^2_{loc}(\Omega)^2,\\
    &\omega &\mapsto& u_{\omega},
    \end{array}
\end{equation*}
where $u_{\omega}$ is the solution to \eqref{eq:u},
 is analytic. Moreover, the derivative of $L$ at $\omega_0$ is given by the solution of
 \Beq
 	\Div (\admittO) \nabla w &=& -\Div i \varepsilon \nabla L(\omega_0) &\mbox{in } \Omega,\\
 	w &=& 0 &\mbox{on } \partial \Omega
 \Eeq{2.3} for all $\omega_0 \in \mathcal{O}.$
\end{Lemma}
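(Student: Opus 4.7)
The plan is to convert \eqref{eq:u} into a Neumann-series fixed point equation in $H^1_0(\Omega)^2$ around $\omega_0$ and then upgrade the convergence from $H^1$ to $H^2_{loc}$ via Lemma \ref{Lemma H2}. First observe that for $\omega \in \mathcal{O}$ the real part of $\sigma + i\omega\varepsilon$ is $\sigma - (\Im \omega)\varepsilon \geq c_1 - \frac{c_1}{2 c_2}\,c_2 = \frac{c_1}{2}>0$, so \eqref{eq:u} is uniformly coercive in $\omega$ and has a unique solution $u_\omega \in H^1(\Omega)^2$ by Lax--Milgram; Lemma \ref{Lemma H2} then places $u_\omega$ in $H^2_{loc}(\Omega)^2$.

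Fix $\omega_0 \in \mathcal{O}$ and define $A(\omega_0) : H^1_0(\Omega)^2 \to H^{-1}(\Omega)^2$ by $v \mapsto -\nabla\cdot[(\sigma+i\omega_0\varepsilon)\nabla v]$, which is a topological isomorphism, and the bounded operator $B : H^1_0(\Omega)^2 \to H^{-1}(\Omega)^2$ by $v \mapsto -\nabla\cdot(\varepsilon\nabla v)$. Setting $w_\omega := u_\omega - u_{\omega_0} \in H^1_0(\Omega)^2$ and subtracting the equations at $\omega$ and $\omega_0$, together with the identity $A(\omega) = A(\omega_0) + i(\omega - \omega_0)B$, yields
\[
A(\omega_0) w_\omega + i(\omega - \omega_0)\,B(w_\omega + u_{\omega_0}) = 0.
\]
Introducing $K := A(\omega_0)^{-1} B$, a bounded operator on $H^1_0(\Omega)^2$, this rearranges into the fixed-point equation
\[
\bigl[I + i(\omega - \omega_0)\,K\bigr]\,w_\omega = -i(\omega - \omega_0)\,A(\omega_0)^{-1} B\,u_{\omega_0}.
\]
For $|\omega - \omega_0|$ sufficiently small, the Neumann series provides the absolutely convergent expansion
\[
w_\omega = \sum_{n \geq 1} [-i(\omega - \omega_0)]^n\, K^{n-1}\, A(\omega_0)^{-1} B\,u_{\omega_0}
\]
in $H^1_0(\Omega)^2$, proving that $L$ is analytic with values in $H^1(\Omega)^2$.

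To promote this to $H^2_{loc}(\Omega)^2$, I would exploit the support structure of the coefficients. Since $\varepsilon = \varepsilon_0$ outside $\Omega'$ and $u_{\omega_0}$ satisfies a constant-coefficient Laplace equation there, the distribution $\nabla\cdot(\varepsilon\nabla u_{\omega_0})$ vanishes on $\Omega \setminus \overline{\Omega'}$; on $\Omega'$ it reduces to $\nabla\varepsilon\cdot\nabla u_{\omega_0} + \varepsilon\Delta u_{\omega_0}$, which is in $L^2(\Omega')$ because $u_{\omega_0} \in H^2(\Omega')$ by Lemma \ref{Lemma H2}. Hence $Bu_{\omega_0} \in L^2(\Omega)^2$. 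By induction one verifies that $K$ restricts to a bounded operator on $H^2(\Omega)^2 \cap H^1_0(\Omega)^2$ using the zero-boundary bound \eqref{H2 regularity0} with $p=2$, so after possibly shrinking the radius of convergence the series for $w_\omega$ converges in $H^2(\Omega)^2$. Combined with $u_{\omega_0} \in H^2_{loc}(\Omega)^2$, this gives analyticity of $L$ into $H^2_{loc}(\Omega)^2$.

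The derivative $L'(\omega_0)$ is then identified as the coefficient of $(\omega - \omega_0)^1$ in the expansion, namely $-iA(\omega_0)^{-1} B u_{\omega_0} \in H^1_0(\Omega)^2$, and unwinding the definitions of $A(\omega_0)$ and $B$ shows that this element is exactly the solution of \eqref{2.3}. The most delicate step in the argument is the $H^2$-bootstrap: one must check that $B$ sends the only $H^1$-regular function $u_{\omega_0}$ into $L^2$, which cannot come from any global $H^2$ regularity (not available because $\varphi$ is only $H^{1/2}$) but from the harmonicity of $u_{\omega_0}$ on $\Omega \setminus \Omega'$ together with the support confinement of $\nabla\varepsilon$.
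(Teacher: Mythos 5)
Your proof is correct, but it follows a genuinely different route from the paper. The paper works with the difference quotient $z = (L(\omega)-L(\omega_0))/(\omega-\omega_0)$, observes that $z$ solves the boundary value problem with source $-i\,\Div \varepsilon \nabla L(\omega_0)$ and zero Dirichlet data, bounds $z$ and then $z-w$ in $H^2(\Omega)$ via Lemma \ref{Lemma H2}, and concludes that $z \to w$ as $\omega \to \omega_0$; this establishes complex differentiability, and analyticity is then inherited from the general theory of Banach-space-valued holomorphic functions (the Whittlesey reference in the bibliography). You instead produce an explicit convergent power series $w_\omega = \sum_{n\ge1}[-i(\omega-\omega_0)]^n K^{n-1}A(\omega_0)^{-1}Bu_{\omega_0}$ via a Neumann series, first in $H^1_0(\Omega)^2$ and then upgraded to $H^2(\Omega)^2$ by showing $K$ preserves $H^2\cap H^1_0$ through the zero-boundary estimate \eqref{H2 regularity0}. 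What your version buys is that analyticity is verified directly from the definition (and you get all higher derivatives for free), without invoking the equivalence of $\mathbb{C}$-differentiability and analyticity for Banach-valued maps; what the paper's version buys is brevity and an immediate identification of the derivative. Both arguments ultimately hinge on the same two facts, which you correctly isolate as the delicate point: $\nabla\cdot(\varepsilon\nabla u_{\omega_0})$ vanishes on $\Omega\setminus\overline{\Omega'}$ because the coefficients are constant there, and lies in $L^2(\Omega')$ by the interior $W^{2,p}$ regularity of Lemma \ref{Lemma H2} — global $H^2$ regularity of $u_{\omega_0}$ being unavailable since $\varphi$ is only $H^{1/2}$. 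Two cosmetic remarks: your operator $B$ must be understood on all of $H^1(\Omega)^2$ (you apply it to $u_{\omega_0}\notin H^1_0$), and you should note explicitly that the $H^2$-convergent series has the same sum as the $H^1$-convergent one, so that it indeed equals $w_\omega$; neither point affects the validity of the argument.
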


\begin{proof}
The quotient \[z := \ds
\frac{L(\omega) - L(\omega_0)}{\omega - \omega_0}\] solves
\Beq
	\Div (\admitt) \nabla z &=& -i\Div \varepsilon \nabla L(\omega_0) & \mbox{in } \Omega,\\
	z &=& 0 &\mbox{on } \partial \Omega.
\Eeq{2.4}
Since $\nabla \cdot \varepsilon \nabla L(\omega_0)=0$ in $\Omega\setminus \overline{\Omega^\prime}$ and $\nabla \cdot \varepsilon \nabla L(\omega_0)$ is in $L^2(\Omega^\prime)$ (see Lemma \ref{Lemma H2}), we can use Lemma \ref{Lemma H2} again to get
\beq
	\|z\|_{H^2(\Omega)} \leq C \|L(\omega_0)\|_{H^2 (\Omega)}
\eeq{2.55}
for some positive constant $C$.

On the other hand, the difference between $z$ and $w$ satisfies
\Beq
	\Div (\admittO) \nabla (z - w) &=& -\Div i (\omega - \omega_0) \varepsilon \nabla z &\mbox{in } \Omega,\\
	z - w &=& 0 &\mbox{on } \partial \Omega,
\Eeq{2.5} where $w$ is defined by \eqref{2.3}.
Applying Lemma \ref{Lemma H2} one more time allows us to obtain
\[
	\|z - w\|_{H^2(\Omega)} \leq C |\omega - \omega_0|\|\nabla z\|_{H^2(\Omega)}.
\]
This, together with \eqref{2.55}, completes the proof of this lemma.
\end{proof}

We are now in position to prove  Proposition \ref{prop:multi}.
\begin{proof}[Proof of Proposition \ref{prop:multi}]  Let $\Omega''=\{x\in \Omega~:~ \mbox{dist}(x,\partial\Omega) > c_0/2\}$, so that $\Omega'\subset\subset\Omega''\subset\subset \Omega$.  From Lemma \ref{eqn 2.1}, $u_{\omega}\in W^{2,p}(\Omega'')$ for any $p>2$.  Hence, it follows from Sobolev embedding that  $u_{\omega}\in \mathcal{C}^{1,\alpha}(\overline {\Omega''})$ for some $\alpha\in (0,1)$.  Thus we can consider $u_{\omega}$ and $\nabla u_{\omega}$ pointwisely. We employ the ideas in \cite{Alberti} to prove the proposition. Since $\det : \mathcal{C}(\overline {\Omega''})^{2 \times 2} \to \mathcal{C}(\overline {\Omega''})$ is multilinear and bounded and
\[
	\begin{array}{llc}
    \vspace{0.2cm}\mathcal{O} &\to& \mathcal{C}^{1,\alpha}(\overline {\Omega''})^2 \\
    \omega & \mapsto & u_{\omega}
    \end{array}
\] is analytic thanks to Lemma \ref{prop:analycity}. Moreover,
\[\begin{array}{llc}
    \vspace{0.2cm}\mathcal{O} &\to& \mathcal{C}^{0,\alpha}(\overline {\Omega''}) \\
    \omega & \mapsto & \det( \nabla u_{\omega})
    \end{array}\] is also analytic.
For $x \in \Omega$, if $\det A_{\omega}(x) = 0$ for every $\omega \in [\underline \omega, \overline \omega]$ then for all $\omega \in \mathcal{O}, \, \det A_{\omega}(x) = 0$ by the analytic continuation theorem. In particular, $\det A_0(x) = 0$ which conflicts with the choice of proper boundary conditions. Hence, we can find $\omega_x \in (\underline \omega, \overline \omega)$ such that $|\det A_{\omega_x}(x)| > 0.$ Moreover, since the map $|\det A_{\omega_x}(\cdot)|$ is continuous, it is strictly positive in the ball $B_{r_x}(x)$, centered at $x$ and of radius $r_x >0$. Noting that $\cup_{x \in \Omega'} B_{r_x}(x)$ covers $\Omega',$ we can use the compactness of $\overline{\Omega'}$ in $\mathbb{R}^2$ to complete the proof.
\end{proof}

From now on, a proper set of boundary conditions $\varphi$ has been chosen. However, in practice, one might not know the values of frequencies and the set $B_1, \cdots, B_N$. We thus suggest to measure the data $u_{\omega}$ for all $\omega \in (\underline \omega, \overline \omega)$. The following corollary of Proposition \ref{prop:multi} will be useful for the sequel.

\begin{Corollary}
    If $\varphi$ is a proper set of boundary conditions then we can find $\lambda > 0$ such that
\begin{equation*}
    \displaystyle\int_{\underline \omega}^{\overline \omega}|\det \nabla u_\omega(x)|dx > \lambda,
\end{equation*}
where $u_\omega(x)$ is the solution of (\ref{eq:u}).
\label{Cor 2.5}
\end{Corollary}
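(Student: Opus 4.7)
The plan is to reduce the uniform lower bound to a pointwise positivity statement for each $x \in \overline{\Omega'}$, prove that pointwise statement by analytic continuation from $\omega=0$, and then recover uniformity via continuity and compactness. Throughout I read the integrand $|\det\nabla u_\omega(x)|$ pointwise in $x$ and integrate over the frequency interval $(\underline\omega,\overline\omega)$.

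First I fix $x \in \overline{\Omega'}$. By Lemma \ref{Lemma H2} applied with $f=0$ and $p>2$, together with Sobolev embedding, $u_\omega \in \mathcal{C}^{1,\alpha}(\overline{\Omega''})$ for some $\alpha \in (0,1)$, so $\nabla u_\omega(x) \in \mathbb{C}^{2\times 2}$ makes sense pointwise. Lemma \ref{prop:analycity} gives that $\omega \mapsto u_\omega$ is analytic from $\mathcal{O}$ into $H^2_{\rm loc}(\Omega)$; combining this with the Sobolev embedding into $\mathcal{C}^{1,\alpha}(\overline{\Omega''})$ upgrades it to analyticity of $\omega \mapsto \nabla u_\omega(x)$. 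Composing with the polynomial $\det$, the scalar function $F_x(\omega) := \det\nabla u_\omega(x)$ is analytic on $\mathcal{O}$.

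Next I show $F_x \not\equiv 0$. At $\omega = 0$ the equation \eqref{eq:u} reduces to the pure conductivity problem in Definition \ref{defi:properbc}, and the proper-set assumption ensures that $\nabla u_0(x)$ is invertible at every $x \in \Omega$. Hence $F_x(0)\ne 0$, so by the identity theorem the zeros of $F_x$ on $\mathcal{O}$, and in particular on $[\underline\omega,\overline\omega]$, form a discrete set of Lebesgue measure zero. Therefore the nonnegative continuous function $\omega \mapsto |F_x(\omega)|$ is strictly positive except on a null set, and
$$ g(x) := \int_{\underline\omega}^{\overline\omega} |\det\nabla u_\omega(x)|\, d\omega > 0 $$
for every $x \in \overline{\Omega'}$.

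Finally, to extract a \emph{uniform} lower bound, I would prove that $g$ is continuous on $\overline{\Omega'}$, then invoke compactness to let $g$ attain its infimum $\lambda>0$. Continuity of $g$ follows from dominated convergence applied to a joint continuity statement: using the analyticity in $\omega$ from Lemma \ref{prop:analycity} and the uniform $W^{2,p}(\Omega'')$ bound from Lemma \ref{Lemma H2} (with $f=0$, so the constant depends only on $\|\varphi\|_{H^{1/2}}$ and the $c_i$), the family $\{\nabla u_\omega\}_{\omega \in [\underline\omega,\overline\omega]}$ is uniformly bounded in $\mathcal{C}^{0,\alpha}(\overline{\Omega''})$, so $(x,\omega)\mapsto \det\nabla u_\omega(x)$ is jointly continuous and uniformly bounded on $\overline{\Omega'}\times[\underline\omega,\overline\omega]$; dominated convergence yields continuity of $g$. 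The main obstacle I anticipate is precisely the joint continuity, which requires patching the separate regularity statements (analyticity in $\omega$, $\mathcal{C}^{1,\alpha}$ regularity in $x$) into a bound that is uniform in $\omega$ over the compact frequency interval; everything downstream is a soft compactness argument.
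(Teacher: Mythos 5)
Your proof is correct and takes essentially the same route as the paper: the paper proves Proposition \ref{prop:multi} by exactly your argument (analyticity of $\omega\mapsto u_\omega$ from Lemma \ref{prop:analycity}, non-vanishing of $\det\nabla u_0$ guaranteed by the proper boundary condition, the identity theorem, and continuity plus compactness of $\overline{\Omega'}$) and then states the corollary as an immediate consequence. The only difference is organizational --- you bound the frequency-integrated function $g(x)$ directly via its continuity rather than passing through the finite cover $B_1,\dots,B_N$ and the finitely many frequencies $\omega_1,\dots,\omega_N$ --- and the one soft spot you flag (upgrading the $H^2_{\mathrm{loc}}$-analyticity of Lemma \ref{prop:analycity} to pointwise evaluation of $\nabla u_\omega$, which really requires the $W^{2,p}$, $p>2$, version of the estimate) is glossed over in the same way in the paper's own proof.
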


\section{The reconstruction method} \label{sect4}

\subsection{Optimization scheme}

Let the function $U_{\omega} = F[\sigma_*, \varepsilon_*; \omega]$ represent the measurement of the solution vector with
$\sigma_*$ and $\varepsilon_*$ being the true distributions.

Consider $$\begin{array}{lclc}
    \vspace{0.2cm}F : & \widetilde{\mathcal{S}} \times (\underline \omega, \overline \omega) &\to& H^2(\Omega)^2 \\
    &(\sigma, \varepsilon; \omega) & \mapsto & u_{\omega} - U_{\omega},
    \end{array}$$
where again $u_{\omega}$ is the solution to (\ref{eq:u}) with a proper set of boundary conditions $\varphi$. Here $\widetilde{\mathcal{S}}$ is considered as a subset of the Hilbert space $H^2(\Omega)^2$. Note that $F$ is well-defined thanks to Lemma \ref{Lemma H2}.

To reconstruct $\sigma$ and $\varepsilon,$ we minimize the discrepancy functional
\begin{equation*}
    J[\sigma, \varepsilon] = \frac{1}{2} \int_{\underline \omega}^{\overline \omega}
     \|F[\sigma, \varepsilon; \omega] \|^2_{H^1 (\Omega)} d\omega
\end{equation*}
for $(\sigma, \varepsilon) \in \widetilde{\mathcal{S}}$.

We first investigate the differentiability of $F$ with respect to the pair of admittivity $(\sigma, \varepsilon)$. For doing so, we need one more notation. Let  $A:B= \sum_{i,j} a_{ij} b_{ij}$ for two matrices $A=(a_{ij})$ and $B=(b_{ij})$. Let  $\langle \, , \, \rangle_{H^s}$ denote the $H^s(\Omega)^2$-scalar product for $s=1,2$.   The following lemma holds.

\begin{Lemma} \label{lem:DF}
\begin{itemize}
\item[{\rm (i)}] The map $F$ is Fr\'echet differentiable in $(\sigma,\varepsilon) \in \widetilde{\mathcal{S}}$. For all $(h, k) \in \mathcal{S},$ $\D F[\sigma, \varepsilon; \omega](h, k)$ is given by the solution of
\Beq
	\Div (\admitt) \nabla v_{\omega} &=& -\Div (h + i \omega k) \nabla u_{\omega} & \mbox{in } \Omega,\\
	v_{\omega} &=& 0 &\mbox{on } \partial \Omega.
\Eeq{eq:v} Moreover, $DF$ is Lipschitz continuous with respect to $(\sigma, \varepsilon)$.
\item[{\rm (ii)}] $J$ is Fr\'echet differentiable in $(\sigma, \varepsilon) \in \widetilde{\mathcal{S}}.$ Moreover, for all $(h, k) \in \mathcal{S},$
\begin{equation} \label{eq:DJ}
\begin{array}{lll} D J[\sigma, \varepsilon] (h, k)  &=& \ds \Re e  \int_{\underline \omega}^{\overline \omega}  \langle DF[\sigma, \varepsilon; \omega](h,k), F[\sigma, \varepsilon;\omega] \rangle_{H^1},\\
\nm &=& \ds \Re e  \int_{\underline \omega}^{\overline \omega}  \langle (h,k), D F[\sigma, \varepsilon; \omega]^* (F[\sigma, \varepsilon;\omega]) \rangle_{H^2},
\end{array}
\end{equation}
where $D F[\sigma, \varepsilon;\omega]^*$ is the adjoint of $DF[\sigma, \varepsilon;\omega]$.
\item[{\rm (iii)}]  Furthermore, for all $(h, k) \in \mathcal{S},$
\begin{equation} \label{eq:DJ2}  D J[\sigma, \varepsilon] (h, k)
= \Re e  \int_{\underline \omega}^{\overline \omega} \int_\Omega (h + i \omega k) \nabla u_\omega :\nabla p_\omega \, d\omega, \end{equation}
where $p_\omega \in H^2(\Omega)$ is the solution to the adjoint problem
\begin{equation}\label{eq:dualp}
    \left\{
        \begin{array}{ll}
            \nabla \cdot(\sigma + i\omega \varepsilon)\nabla p_{\omega} =
             \overline{F(\sigma, \varepsilon; \omega)} - \Delta \overline{F(\sigma, \varepsilon; \omega)} &\mbox{in } \Omega,\\
            p_{\omega} = 0 &\mbox{on } \partial \Omega.
        \end{array}
    \right.
\end{equation}

\end{itemize}
\end{Lemma}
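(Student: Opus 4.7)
The plan is to treat the three parts in order: (i) is the substantive part, and (ii)--(iii) reduce respectively to the chain rule and to integration by parts.

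For part (i) I would first compare the perturbed and unperturbed solutions. Let $\tilde u_\omega$ denote the solution of \eqref{eq:u} with admittivity $(\sigma+h)+i\omega(\varepsilon+k)$ and set $\delta u:=\tilde u_\omega-u_\omega$. Since both solutions share the same boundary datum $\varphi$, we have $\delta u=0$ on $\partial\Omega$, and subtraction of the two PDEs yields an elliptic equation for $\delta u$ with source $-\nabla\cdot((h+i\omega k)\nabla \tilde u_\omega)$. Applying the zero-boundary estimate \eqref{H2 regularity0} of Lemma~\ref{Lemma H2}, expanding the divergence, and absorbing the nonlinear products via the 2D Sobolev embeddings $H^2(\Omega)\hookrightarrow L^\infty(\Omega)$ and $H^1(\Omega)\hookrightarrow L^4(\Omega)$ gives $\|\delta u\|_{H^2}\le C\|(h,k)\|_{H^2}$. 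The same procedure applied to the residual $\delta u-v_\omega$, which solves an elliptic equation with source $-\nabla\cdot((h+i\omega k)\nabla\delta u)$ and zero boundary values, then yields $\|\delta u-v_\omega\|_{H^2}\le C\|(h,k)\|_{H^2}\,\|\delta u\|_{H^2}=O(\|(h,k)\|_{H^2}^2)$, which proves Fr\'echet differentiability with derivative $v_\omega$. The Lipschitz dependence of $DF$ on $(\sigma,\varepsilon)$ follows by a parallel comparison: the difference of the derivatives at two base admittivities satisfies a PDE whose source combines $(h+i\omega k)\nabla(u^{(1)}_\omega-u^{(2)}_\omega)$ and $((\sigma_1-\sigma_2)+i\omega(\varepsilon_1-\varepsilon_2))\nabla v^{(2)}_\omega$, each controlled by the same kind of $H^2$ estimate.

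Part (ii) is then immediate: differentiating $J=\tfrac12\int \|F\|^2_{H^1}\,d\omega$ using the sesquilinear structure of the complex $H^1$-inner product produces the real part and the $H^1$-pairing in the first identity of \eqref{eq:DJ}; the second identity is the Riesz representation that defines the adjoint $DF^*$ as a bounded operator into the $H^2$-space of admissible perturbations.

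For part (iii) I would start from $\langle v_\omega,F\rangle_{H^1}=\int v_\omega\overline F+\int \nabla v_\omega\cdot\nabla\overline F$ and integrate by parts the second term, using $v_\omega=0$ on $\partial\Omega$, to obtain $\int v_\omega(\overline F-\Delta\overline F)$. Substituting the adjoint equation \eqref{eq:dualp} and integrating by parts twice more (first using $v_\omega=0$, then using $p_\omega=0$) transforms the integrand first into $-\int(\sigma+i\omega\varepsilon)\nabla v_\omega\cdot\nabla p_\omega$, then via \eqref{eq:v} into $-\int p_\omega\nabla\cdot((h+i\omega k)\nabla u_\omega)$, and finally into $\int(h+i\omega k)\nabla u_\omega:\nabla p_\omega$. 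The Frobenius inner product $:$ appears because $u_\omega$ and $p_\omega$ are $\mathbb{R}^2$-valued (two independent scalar solutions corresponding to the two components of $\varphi$), so that each integration by parts is carried out componentwise.

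The main obstacle is the $H^2$ Fr\'echet differentiability in part (i): because the target of $F$ is $H^2(\Omega)^2$ rather than just $H^1(\Omega)^2$, the remainder estimate must be established in the stronger norm, which forces the careful estimation of the product $(h+i\omega k)\nabla\delta u$ in Sobolev spaces via the 2D embeddings mentioned above. Once this ingredient is in hand, parts (ii) and (iii) are essentially computational; the only items requiring attention there are the complex conjugation in the sesquilinear inner product (which is why only the real part appears) and the bookkeeping of the vector-valued structure of $u_\omega$ in the final integration by parts.
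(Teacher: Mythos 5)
Your proposal is correct and follows essentially the same route as the paper: the same first-order/remainder decomposition for part (i) via the difference $w_{h,k}=\tilde u_\omega-u_\omega$ and the residual $w_{h,k}-v_\omega$, the same $H^2$ elliptic estimates combined with the two-dimensional embeddings $H^2\hookrightarrow L^\infty$ and $H^1\hookrightarrow L^4$, the same comparison PDE for the Lipschitz continuity of $DF$, and the same chain-rule and integration-by-parts arguments for (ii) and (iii) (which the paper only sketches). The only cosmetic difference is that you place the perturbation in the source term with the unperturbed operator, whereas the paper keeps the perturbed operator and the unperturbed source; both lead to the same estimates.
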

\begin{proof}
Take $(h, k) \in \mathcal {S}$ such that $(\sigma + h, \varepsilon + k)$ still belongs to $\widetilde{\mathcal{S}}$.
Define \[w_{h, k} = F[\sigma + h, \varepsilon + k;\omega] - F[\sigma, \varepsilon;\omega] \in H_0^1(\Omega)^2.\] We have
\begin{eqnarray*}
	\Div (\sigma + h + i \omega (\varepsilon + k)) \nabla w_{h, k} &=& -\Div (\sigma + h + i \omega (\varepsilon + k)) \nabla (F[\sigma, \varepsilon;\omega] + U_\omega)\\
	&=& \Div (h + i \omega k) \nabla (F[\omega, \sigma, \varepsilon] + U_\omega).
\end{eqnarray*}
Using Sobolev embedding and Lemma \ref{Lemma H2}, we have
\beq
\begin{array}{ll}
 	\|w_{h, k}\|_{H^2(\Omega)^2} &\leq C  \|\nabla\cdot( h + i \omega  k) \nabla (F[\sigma, \varepsilon;\omega] + U_\omega) \|_{L^2(\Omega')^2}\\
 &\leq C \Big( \|h + i \omega  k\|_{L^\infty(\Omega')} \| F[\sigma, \varepsilon;\omega] + U_\omega\|_{H^2(\Omega')^2}  \\
 &~~~~\quad\quad + \|\nabla (h + i \omega  k)\|_{L^4(\Omega')^2}\| \nabla (F[\sigma, \varepsilon;\omega]+ U_\omega) \|_{L^4(\Omega')^{2\times 2}}    \Big)\\
 &\leq C \left(\|h\|_{H^2(\Omega)} + \|k\|_{H^2(\Omega)}\right) \left( \|F[\omega, \sigma, \varepsilon]\|_{H^2(\Omega')^2} + \| U_\omega \|_{H^2(\Omega')^2} \right).
\end{array}
\eeq{Sobolev-estimate}

The function $w_{h, k} - v_{\omega} \in H^1_0(\Omega)$ and satisfies
\begin{eqnarray*}
	\Div (\admitt) \nabla (w_{h, k} - v_{\omega}) &=&  - \nabla \cdot (h + i\omega k)
	\nabla w_{h, k}.
\end{eqnarray*}
Thus, again by repeating the estimates as in (\ref{Sobolev-estimate}), we get
\begin{eqnarray*}
	\|w_{h, k} - v_{\omega}\|_{H^2(\Omega)^2}
	&\leq& C \left( \|h\|_{H^2(\Omega)} + \|k\|_{H^2(\Omega)}\right)\|w_{h, k}\|_{H^2( \Omega')^2}\\
&\leq& C \left( \|h\|_{H^2(\Omega)} + \|k\|_{H^2(\Omega)}\right)^2 \left(\|F[\omega, \sigma, \varepsilon]\|_{H^2( \Omega')^2} +  \| U_\omega \|_{H^2(\Omega')^2} \right).
\end{eqnarray*}
Item (i) has  been then proved.  Moreover, it is easy to see that $DF$ is Lipschitz continuous with respect to $(\sigma, \varepsilon)$. In fact,
let $(\sigma, \varepsilon)$ and $(\sigma', \varepsilon')$ be in $\widetilde{\mathcal{S}}$. Let $(h, k)$ be in $\mathcal{S}$. Then, $DF[\sigma, \varepsilon;\omega](h, k) - DF[\sigma', \varepsilon'; \omega](h, k)$ is solution to the following equation:
\begin{equation*}
\left \{
\begin{array}{l}
\vspace{0.2cm} \nabla \cdot (\sigma + i \omega \varepsilon) \nabla \left(DF[\sigma, \varepsilon; \omega](h, k) - DF[\sigma', \varepsilon'; \omega](h, k) \right) = \\
\vspace{0.3cm}\hspace{1cm} - \nabla \cdot (h + i\omega k) \nabla (F[\sigma, \varepsilon;\omega] - F[\sigma', \varepsilon';\omega]) \\
\vspace{0.3cm}\hspace{1cm} - \nabla \cdot (\sigma - \sigma' + i \omega (\varepsilon - \varepsilon'))\nabla DF[\sigma', \varepsilon';\omega](h,k)\quad \textrm{in}\, \Omega, \\
DF[\sigma, \varepsilon;\omega](h,k) - DF[\sigma', \varepsilon';\omega](h,k)= 0 \quad \textrm{on}\, \partial \Omega.
\end{array}
\right .
\end{equation*}
Therefore, applying similar estimate as in (\ref{Sobolev-estimate}), we have
\begin{equation}\label{Es1}\begin{array}{l}
\vspace{0.2cm} \| (DF[\sigma, \varepsilon;\omega] - DF[\sigma', \varepsilon';\omega]) (h,k)\|_{H^2(\Omega)^2} \\
\vspace{0.2cm}\hspace{2.5cm}\leq C \left (\|h + i \omega k\|_{H^2(\Omega)} \| F[\sigma , \varepsilon;\omega] - F[\sigma', \varepsilon';\omega])\|_{H^2(\Omega')^2} \right.\\
\vspace{0.2cm} \hspace{3.5cm}\left.+ \|\sigma - \sigma' + i \omega (\varepsilon - \varepsilon')\|_{H^2(\Omega)}\| DF[\sigma', \varepsilon';\omega](h,k)\|_{H^2(\Omega)^2}\right).
\end{array}
\end{equation}
Since $F[\sigma , \varepsilon;\omega] - F[\sigma', \varepsilon';\omega]$ satisfies
\begin{eqnarray*}
	\Div (\sigma+ i \omega \varepsilon) \nabla (F[\sigma , \varepsilon;\omega] - F[\sigma', \varepsilon';\omega])= -\Div (\sigma - \sigma' + i \omega (\varepsilon - \varepsilon')) \nabla ( F[\sigma', \varepsilon';\omega] + U_\omega),
\end{eqnarray*}
we apply a similar estimate as in (\ref{Sobolev-estimate}) to get  Lipschitz continuity of $F$:
\begin{equation}\label{Es2}
\begin{array}{lll}
\| F[\sigma , \varepsilon;\omega] - F[\sigma', \varepsilon';\omega])\|_{H^2(\Omega')}&\le& C \|\sigma - \sigma' + i \omega (\varepsilon - \varepsilon')\|_{H^2(\Omega)} \\
\nm
&& \qquad \times \left( \|F[\sigma', \varepsilon';\omega]\|_{H^2( \Omega')^2} +  \| U_\omega \|_{H^2(\Omega')^2} \right).
\end{array}
\end{equation}
Noting that $DF[\sigma, \varepsilon;\omega](h,k)$ is the solution of (\ref{eq:v}), we also have
\begin{equation}\label{Es3}
\| DF[\sigma', \varepsilon';\omega](h,k)\|_{H^2(\Omega')}\le C \|h + i \omega k\|_{H^2(\Omega)}\|\left( F[\sigma', \varepsilon';\omega]\|_{H^2( \Omega')^2} + \| U_\omega \|_{H^2(\Omega')^2} \right).
\end{equation}
Hence, combining estimates (\ref{Es1})-(\ref{Es3}), we have
$$\begin{array}{lll}
 \| DF[\sigma, \varepsilon;\omega] - DF[\sigma', \varepsilon';\omega]\|_{\mathcal L(H^2(\Omega), H^2(\Omega)) }
 &\le & C  \|\sigma - \sigma' + i \omega (\varepsilon - \varepsilon')\|_{H^2(\Omega)}\\
 \nm
 && \times
\left( \| F[\sigma', \varepsilon';\omega]\|_{H^2(\Omega')^2} + \| U_\omega \|_{H^2(\Omega')^2} \right).
\end{array}
$$

    Item (ii) can be easily proved by using arguments similar to those used above. Item (iii) follows by integration by parts.
\end{proof}

We can now apply the gradient descent method to minimize the discrepancy functional $J$. We compute the iterates
\begin{equation} \label{defeta}
(\sigma_{n+1}, \varepsilon_{n+1}) = T[\sigma_n, \varepsilon_n] - \mu D
J[T[\sigma_n,\varepsilon_n]],
\end{equation}
where $\mu >0$ is the step size and
$T[f]$ is any approximation of the Hilbert projection from $H^2(\Omega)^2$ onto $\overline{\widetilde{\mathcal{S}}}$ with $\overline{\widetilde{\mathcal{S}}}$ being the closure of $\widetilde{\mathcal{S}}$ (in the $H^2$-norm). The derivative $D
J[T[\sigma_n, \varepsilon_n]]$
is given by
$$
 DJ[T[\sigma_n,\varepsilon_n]]=
 (- \Re e \, \nabla u_\omega : \nabla p_\omega, \omega \Im m \, \nabla u_\omega :
 \nabla p_\omega),$$
 where $u_\omega$ and $p_\omega$ are respectively the solutions to (\ref{eq:u}) and (\ref{eq:dualp}) with $(\sigma, \varepsilon)=
 T[\sigma_n,\varepsilon_n]$.

The presence of $T$ is necessary because $
(\sigma_n, \varepsilon_n)$ might not be in $\widetilde{\mathcal{S}}$.

Using (iv) we can show that the optimal control algorithm (\ref{defeta}) is nothing else than the following Landweber scheme \cite{landweber, Hankeetal:nm1995} given by
\beq\begin{array}{rcl}
\ds(\sigma_{n+1}, \varepsilon_{n+1}) \\&& \ds\hspace*{-.8in} = T[\sigma_n, \varepsilon_n] - \mu \int_{\underline \omega}^{\overline \omega}\D
F^*[T[\sigma_n, \varepsilon_n];\omega] (F[T[\sigma_n, \varepsilon_n];\omega]) \, d\omega.
\end{array}
\eeq{minimizing sequence}

  \subsection{Initial guess}

  To initialize the previous optimal control algorithm, we need to construct an initial guess for the electrical property distributions $\sigma$ and $\epsilon$.

 Consider the solution $u_\omega$ to (\ref{eq:u}).
 For all $x \in \Omega$,
\[
    \Delta u_{\omega} + \frac{\nabla u_{\omega}^T\nabla (\sigma + i\omega \varepsilon)}{\sigma + i\omega \varepsilon} = 0.
\]
It follows that
\begin{equation}\label{eq:initeq}
    A_{\omega}^T \frac{\nabla (\sigma + i\omega \varepsilon)}{\sigma + i\omega \varepsilon} = -\nabla \cdot A_{\omega},
\end{equation}
where
\[
    A_{\omega} = \nabla u_{\omega}.
\]
Equation (\ref{eq:initeq}) gives us several ways to  reconstruct both
$\sigma$ and $\varepsilon$. We suggest to define the map
$\gamma_{\omega} = \log (\sigma + i\omega \varepsilon)$, whose
imaginary part is chosen in $[0, \frac{\pi}{2})$, and solve
\begin{equation} \label{eq:gamma2}
 \left\{ \begin{array}{ll}
    \vspace{0.2cm}\displaystyle\Delta \gamma_{\omega} = \nabla \cdot (- (\overline{A_{\omega}} A_{\omega}^T)^{\dagger} \overline{A_{\omega}}
    \nabla \cdot A_{\omega}) &\hspace{1cm}\mbox{in } \Omega,\\
    \displaystyle\gamma_{\omega} = \log(\sigma_0 + i\omega \varepsilon_0) &\hspace{1cm}\mbox{on } \partial
    \Omega,
    \end{array}
    \right.
\end{equation}
where  $\dagger$ denotes the
pseudo-inverse. The knowledge of $\gamma_{\omega}$ implies those of $\sigma$ and
$\varepsilon$. We denote by $\sigma_I$ and $\varepsilon_I$ the obtained
functions by averaging $\gamma_{\omega}$ over $\omega$:
\begin{equation*}
\displaystyle\sigma_I + i \frac{(\overline{\omega}) + \underline{\omega}}{2}
\varepsilon_I = \frac{1}{\overline{\omega} - \underline{\omega}}
\int_{\underline \omega}^{\overline \omega}  e^{\gamma_\omega}
d\omega,
\end{equation*}
where $\gamma_\omega$ is given by (\ref{eq:gamma2}).  We use
$\sigma_I$ and $\varepsilon_I$ as the initial guess for our desired
coefficients.

\subsection{Convergence of the minimizing sequence}

The following theorem holds.
 \begin{Theorem}
 For all $(h, k) \in \mathcal{S},$ we have the following estimate:
\begin{equation}
    \int_{\underline \omega}^{\overline \omega} \| \D F[\sigma,
    \varepsilon; \omega] (h, k)\|_{H^1(\Omega)^2} d\omega \geq C\|(h, k)\|_{H^2(\Omega)^2}
\label{3.8}
\end{equation}
for some positive constant $C$.
\label{Theorem 4.1}
\end{Theorem}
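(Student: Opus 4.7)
The plan is to argue by contradiction. If the estimate (\ref{3.8}) failed, there would exist a sequence $(h_n,k_n)\in\mathcal S$ with $\|(h_n,k_n)\|_{H^2(\Omega)^2}=1$ and $\int_{\underline\omega}^{\overline\omega}\|DF[\sigma,\varepsilon;\omega](h_n,k_n)\|_{H^1(\Omega)^2}\,d\omega\to 0$. Since the elements of $\mathcal S$ are uniformly supported in $\overline{\Omega'}$ and $H^2$-bounded, Rellich--Kondrachov extracts a subsequence converging strongly in $H^1(\Omega)^2$ and weakly in $H^2(\Omega)^2$ to some $(h_*,k_*)$. Crucially, the constraint $\|\eta_j\|_{H^2}\le c_3\|\eta_j\|_{H^1}$ together with $\|(h_n,k_n)\|_{H^2}=1$ yields the lower bound $\|(h_n,k_n)\|_{H^1}\ge 1/c_3$, so $(h_*,k_*)\ne 0$ in $H^1(\Omega)^2$.

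Denote $v_\omega^{(n)}=DF[\sigma,\varepsilon;\omega](h_n,k_n)$ and let $v_\omega^*$ solve (\ref{eq:v}) with $(h,k)$ replaced by $(h_*,k_*)$. By linearity of $DF$ in $(h,k)$, the difference $v_\omega^{(n)}-v_\omega^*$ solves a problem of the form (\ref{eq:v}) with source governed by $(h_n-h_*,k_n-k_*)$. Lemma \ref{Lemma H2} together with the strong $H^1$-convergence of $(h_n,k_n)$ then yields $v_\omega^{(n)}\to v_\omega^*$ in $H^1$ with a bound uniform in $\omega$, so dominated convergence gives $\int\|v_\omega^*\|_{H^1}d\omega=0$ and hence $v_\omega^*\equiv 0$ for a.e.\ $\omega$. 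The equation (\ref{eq:v}) then forces
\begin{equation*}
\nabla\cdot\bigl((h_*+i\omega k_*)\nabla u_\omega\bigr)=0\quad\text{in }\Omega,
\end{equation*}
first for a.e.\ $\omega$, and then for all $\omega\in(\underline\omega,\overline\omega)$ by the analyticity of $\omega\mapsto u_\omega$ proved in Lemma \ref{prop:analycity}.

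Set $\phi_\omega:=h_*+i\omega k_*$ and expand the identity componentwise to get the $2\times 2$ pointwise system $\nabla u_\omega^{(l)}\cdot\nabla\phi_\omega=-\phi_\omega\,\Delta u_\omega^{(l)}$ for $l=1,2$. On each ball $B_j$ from Proposition \ref{Pro Invertibility of A}, the invertibility of $A_{\omega_j}=\nabla u_{\omega_j}$ permits solving the system for $\nabla\phi_{\omega_j}$, yielding a first-order linear ODE $\nabla\phi_{\omega_j}=-\phi_{\omega_j}M_j$ with a bounded coefficient $M_j$ (Lemma \ref{Lemma H2} ensures $u_{\omega_j}\in W^{2,p}$ for any $p$). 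Along any $C^1$ curve $\gamma:[0,1]\to\overline{B_j}$ with $\gamma((0,1])\subset B_j$, this integrates to $\phi_{\omega_j}(\gamma(t))=\phi_{\omega_j}(\gamma(0))\exp\bigl(-\int_0^t M_j\cdot\gamma'\,ds\bigr)$, so a single vanishing point at $\gamma(0)$ forces $\phi_{\omega_j}$ to vanish along the entire curve.

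To launch the propagation, at least one $B_{j_0}$ in the cover must intersect $\Omega\setminus\overline{\Omega'}$, where $\phi_\omega\equiv 0$ by the support condition on $\mathcal S$; the ODE argument then gives $\phi_{\omega_{j_0}}\equiv 0$ on $B_{j_0}$. Since $\omega_{j_0}>0$ and $h_*,k_*$ are real, this separates into $h_*=k_*=0$ on $B_{j_0}$. Proceeding inductively along a chain of balls with overlapping closures (possible by connectedness of $\overline{\Omega'}$ and the covering structure of Proposition \ref{Pro Invertibility of A}), continuity of $h_*,k_*$ (via $H^2\hookrightarrow C^0$) transfers the vanishing to a boundary point of the next ball, where the ODE can be relaunched. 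This gives $h_*=k_*=0$ throughout $\overline{\Omega'}$, and hence on all of $\Omega$, contradicting $(h_*,k_*)\ne 0$. The most delicate step is this geometric chain argument, which rests on the compactness of $\overline{\Omega'}$ together with the $L^\infty$-control of $M_j$ provided by the invertibility of $A_{\omega_j}$ on $B_j$ and the $W^{2,p}$-regularity of $u_\omega$.
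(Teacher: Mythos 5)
Your overall architecture is the paper's: argue by contradiction, normalize $\|(h_n,k_n)\|_{H^2}=1$, use the constraint $\|\eta_j\|_{H^2(\Omega)}\le c_3\|\eta_j\|_{H^1(\Omega)}$ built into $\mathcal{S}$ to prevent the sequence from collapsing, and exploit the balls $B_j$ and frequencies $\omega_j$ of Proposition \ref{Pro Invertibility of A} to force the limit to vanish. Where you diverge is the key step of killing the limit $(h_*,k_*)$: the paper rewrites the source as $(\sigma+i\omega_j\varepsilon)\,\nabla u_{\omega_j}^T\nabla\bigl(\tfrac{h_n+i\omega_j k_n}{\sigma+i\omega_j\varepsilon}\bigr)$, deduces from the $H^{-1}$-smallness of the right-hand side and the invertibility of $\nabla u_{\omega_j}$ that the gradient of the quotient tends to $0$ in $L^2(B_j)$, and concludes with Poincar\'e's inequality and the support condition --- everything stays at the level of $L^2$ and distributions. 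You instead pass to the limit first and then integrate a pointwise first-order system $\nabla\phi_{\omega_j}=-\phi_{\omega_j}M_j$ along curves.

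That is where there is a genuine gap. You claim ``$L^\infty$-control of $M_j$'' from the $W^{2,p}$-regularity of $u_\omega$, but $M_j=(A_{\omega_j}^T)^{-1}(\Delta u_{\omega_j}^{(1)},\Delta u_{\omega_j}^{(2)})^T$ involves second derivatives of $u_{\omega_j}$, and Lemma \ref{Lemma H2} only gives $u_{\omega_j}\in W^{2,p}(\Omega')$ for finite $p$; in two dimensions this embeds into $\mathcal{C}^{1,\alpha}$ but does \emph{not} put $\Delta u_{\omega_j}$ in $L^\infty$ (indeed $\Delta u_{\omega_j}=-\nabla u_{\omega_j}\cdot\nabla(\sigma+i\omega_j\varepsilon)/(\sigma+i\omega_j\varepsilon)$ with $\nabla\sigma,\nabla\varepsilon$ only in $H^1\hookrightarrow L^q$, $q<\infty$). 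Consequently the representation $\phi_{\omega_j}(\gamma(t))=\phi_{\omega_j}(\gamma(0))\exp(-\int_0^t M_j\cdot\gamma'\,ds)$ along an \emph{arbitrary} $C^1$ curve is not justified: $M_j$ is only defined almost everywhere and need not be integrable on a prescribed curve. This can be repaired --- e.g.\ by a Fubini argument showing $M_j$ restricted to almost every ray from a given point is in $L^1$ (which holds since $p>2$), applying Gronwall to $|\phi_{\omega_j}|^2$ along those rays, and invoking the continuity of $\phi_{\omega_j}$ (from $H^2\hookrightarrow \mathcal{C}^0$) to fill in the exceptional set --- but as written the step fails, and it is precisely the step you identify as the crux. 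Your subsequent chain-of-balls propagation also leans on unstated facts (that some $B_{j_0}$ meets $\Omega\setminus\overline{\Omega'}$, and that the intersection graph of the cover of the connected compact set $\overline{\Omega'}$ is connected); these are true and provable, but note that the paper's $L^2$/Poincar\'e route reaches the same conclusion without ever needing pointwise ODEs, which is why it is the more robust argument here.
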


 \begin{proof}

  Assume to the contrary that \eqref{3.8} is not true. That means we can find $h_n$ and $k_n$ in $\mathcal{S}$ such that
 \[
 	\|h_n\|_{H^2(\Omega)} + \|k_n\|_{H^2(\Omega)} = 1
 \] and
 \[
 	\intome \|D F[\sigma, \varepsilon;\omega](h_n, k_n)\|_{H^1(\Omega)} d\omega \rightarrow 0
 \] as $n \rightarrow \infty$.
By compactness, up to extracting a subsequence, we can assume that
\beq
(h_n, k_n) \rightharpoonup (h, k) \quad \mbox{in } H_0^1(\Omega)^2 .
\eeq{5.3}

Denote by $u_{\omega}$ the vector  $F[\sigma, \varepsilon;\omega]$ and $v_{\omega}^n$ the vector $D F[\sigma, \varepsilon;\omega](h_n, k_n)$. We have \[v_{\omega}^n \rightarrow 0 \quad \mbox{in } H^1_0(\Omega)\] for all $\omega \in (\underline \omega, \overline \omega).$

Recall $N, B_1, \cdots, B_N, \omega_1, \cdots, \omega_N$, as in Proposition \ref{prop:multi}. Fixing $j \in \{1, \cdots, N\}$, we have
\begin{eqnarray*}
	-\Div (\admittj) \nabla v_{\omega_j}^n &=& \Div (h_n + i \omega_j k_n) \nabla u_{\omega_j} \\
	&=& (\admittj)  \nabla u_{\omega_j}^T \nabla \frac{h_n + i\omega_j k_n}{\admittj}
\end{eqnarray*} in $B_j$.
Equivalently,
$$
\nabla u_{\omega_j}^T \nabla \frac{h_n + i\omega_j k_n}{\admittj} = - \nabla \log (\admittj) \cdot \nabla v_{\omega_j}^n - \Delta v_{\omega_j}^n.
$$
Note that the left-hand side of the equation above tends to $0$ in $H^{-1}(\Omega)$,
so is $\nabla \frac{h_n + i\omega_j k_n}{\admittj}$ in $L^2(B_j)$.
By using Poincar\'e's inequality and the fact that $\overline {\Omega'} \subset \cup_{j=1}^N \overline B_j$,
we arrive at $h=k=0$.
Since $(h_n,k_n)\in \mathcal S$, $\|h_n\|_{H^2(\Omega)} + \|k_n\|_{H^2(\Omega)} \rightarrow 0$, which contradicts the assumption.
 \end{proof}

Note that as a direct consequence of Theorem \ref{Theorem 4.1}, it follows that
\beq%
	 \left(\int_{\underline \omega}^{\overline \omega} \|\D F[\sigma,
    \varepsilon;\omega] (h, k)\|_{H^1(\Omega)^2}^2 d\omega\right)^{\frac{1}{2}} \geq C\|(h, k)\|_{H^2(\Omega)^2}
\eeq{}
for some positive constant $C$.
Hence, Theorem \ref{Theorem 4.1} and Proposition \ref{proposition Landweber} yield our main result in this paper.
\begin{Theorem}
	The sequence defined in \eqref{minimizing sequence} converges to the true admittivity $(\sigma_*, \varepsilon_*)$ of $\Omega$ in the following sense: there is $\eta>0$ such that if
	$\| T[\sigma_I, \varepsilon_I] -  (\sigma_*, \varepsilon_*)\|_{H^2(\Omega)^2} < \eta$, then
	$$ \lim_{n \rightarrow + \infty} \| \varepsilon_n - \varepsilon_*\|_{H^2(\Omega)} + \| \sigma_n - \sigma_*\|_{H^2(\Omega)} =0.$$
\end{Theorem}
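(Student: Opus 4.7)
The plan is to recognize that the iteration (\ref{minimizing sequence}) is precisely a projected (cutoff) Landweber scheme for the nonlinear operator
\[
\mathcal{F} : \overline{\widetilde{\mathcal{S}}} \subset H^2(\Omega)^2 \longrightarrow L^2\bigl((\underline{\omega},\overline{\omega});\,H^1(\Omega)^2\bigr), \qquad (\sigma,\varepsilon) \longmapsto F[\sigma,\varepsilon;\cdot],
\]
applied to the equation $\mathcal{F}(\sigma_*,\varepsilon_*)=0$ (which holds by the very definition of $U_\omega$), with cutoff $T$ onto $\overline{\widetilde{\mathcal{S}}}$ and step size $\mu$. The conclusion is then immediate from Proposition \ref{proposition Landweber} of the appendix provided the standing hypotheses of that proposition are verified in our setting.

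The first step is to record the necessary regularity of $\mathcal{F}$. Lemma \ref{lem:DF}(i) already gives that $\mathcal{F}$ is Fréchet differentiable on $\widetilde{\mathcal{S}}$ and that the derivative $D\mathcal{F}$ is Lipschitz continuous with respect to $(\sigma,\varepsilon)$, with constants controlled uniformly on $\widetilde{\mathcal{S}}$ thanks to the a priori bounds built into $\mathcal{S}$. The second step is the crucial stability bound. Squaring the estimate from Theorem \ref{Theorem 4.1} and using the Cauchy--Schwarz inequality in $\omega$ (as the excerpt points out right after Theorem \ref{Theorem 4.1}), we obtain the $L^2$-in-$\omega$ coercivity
\[
\|D\mathcal{F}[\sigma,\varepsilon](h,k)\|_{L^2((\underline{\omega},\overline{\omega});H^1(\Omega)^2)} \ \geq \ C\,\|(h,k)\|_{H^2(\Omega)^2}
\]
uniformly for $(\sigma,\varepsilon)\in\widetilde{\mathcal{S}}$ and $(h,k)\in\mathcal{S}$. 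In particular, $D\mathcal{F}$ is bounded below at the true solution $(\sigma_*,\varepsilon_*)$.

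The third step is to check that the Landweber update rule used in the appendix matches (\ref{minimizing sequence}). By Lemma \ref{lem:DF}(ii), the gradient $DJ$ of the discrepancy functional is exactly $\int_{\underline{\omega}}^{\overline{\omega}} DF[\,\cdot\,;\omega]^*(F[\,\cdot\,;\omega])\,d\omega$, so the optimization step (\ref{defeta}) coincides with (\ref{minimizing sequence}), and this is the form addressed in the appendix. Combining the uniform Lipschitz continuity of $D\mathcal{F}$ with the coercivity bound above, one obtains a local tangential-cone-type inequality in a neighbourhood of $(\sigma_*,\varepsilon_*)$: the nonlinear residual is controlled, up to a small multiplicative error, by its linearisation. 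Choosing $\mu$ sufficiently small relative to the Lipschitz and coercivity constants, and $\eta$ small enough so that the tangential cone inequality is valid throughout the iteration, Proposition \ref{proposition Landweber} yields a contraction on the error, whence convergence in $H^2(\Omega)^2$.

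The main obstacle is that the stability bound in Theorem \ref{Theorem 4.1} is a linearised (one-step) statement, while the Landweber iteration is genuinely nonlinear and the cutoff $T$ is only an approximate Hilbert projection. The delicate point is therefore to ensure that, starting from $T[\sigma_I,\varepsilon_I]$ within distance $\eta$ of $(\sigma_*,\varepsilon_*)$, every iterate stays inside the neighbourhood on which the local tangential cone condition (derived from coercivity and Lipschitz continuity of $D\mathcal{F}$) holds, so that the cutoff does not destroy monotonicity of the error. This is exactly the scenario handled abstractly by the appendix proposition, and the smallness of $\eta$ in the statement is chosen to match its hypotheses.
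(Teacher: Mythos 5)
Your proposal is correct and follows exactly the paper's route: the paper likewise squares the stability estimate of Theorem \ref{Theorem 4.1} to obtain the $L^2$-in-$\omega$ coercivity condition, invokes the Lipschitz continuity of $DF$ from Lemma \ref{lem:DF}, and concludes by applying Proposition \ref{proposition Landweber} to the projected Landweber iteration \eqref{minimizing sequence}. Your additional remarks on the tangential-cone-type inequality and on keeping the iterates in the neighbourhood where it holds are precisely the content of the appendix proof of that proposition, so nothing is missing.
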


%

\section{Concluding remarks}

 In this paper we have proposed for the first time an optimal control algorithm for admittivity imaging from multi-frequency micro-electrical data. We have proved its convergence and its local stability. Our approach in this paper can be extended to elastography and can be used to image both shear modulus and viscosity tissue properties from internal displacement measurements. Another interesting problem is to image  tissues with anisotropic impedance distribution from micro-electrical data.

\appendix
\section{The convergence of the Landweber sequence with a Hilbert projection}
This appendix follows from \cite{Hankeetal:nm1995}; see also \cite{otmar}. It proves the convergence of
the Landweber scheme with a Hilbert projection.

Let $X$ and $Y$ be Hilbert spaces and $F:  K \times (\underline \omega, \overline \omega) \rightarrow Y$ be a differentiable map where $K$ is a convex subset of $X$. Let $\langle \, ,\, \rangle_X$ and
$\langle \, ,\, \rangle_Y$ denote the scalar products in $X$ and $Y$, respectively.

We are interested in solving the equation
\beq
	F[ x_*;\omega] = 0 \quad \mbox{for all } \omega \in (\underline \omega, \overline \omega).
\eeq{}
It is natural to minimize
\beq
	J[x] = \frac{1}{2}\intome\|F[x;\omega] \|_Y^2 d \omega,
\eeq{} with $x \in K$.
Assume that $F[\cdot;\omega]$  is Fr\'echet differentiable. So is $J$. The derivative of $J$ is given by
\begin{eqnarray*}
	\Dx J[x](h) &=& \intome \langle \Dx F[x;\omega](h), F[x;\omega] \rangle_Y d\omega \nonumber \\ &=& \intome\langle h, \Dx F[x;\omega]^* (F[x;\omega])\rangle_X d\omega,
\end{eqnarray*} where the superscript $^*$ indicates the dual map. The iteration sequence due to the descent gradient method is given by
\beq
	x_{n + 1} = T[x_n] - \mu \intome \Dx F[T[x_n];\omega]^* (F[T[x_n];\omega]) \, d\omega.
\eeq{A4}
Here, $\mu$ is a small number and $T[x] \in K$ is an approximation of the Hilbert projection of $X$ onto $\overline K$
\beq
	 P: X \ni x \mapsto {\rm argmin}\{\|x - a\|: a \in \overline K\}.
\eeq{}
Without loss of generality, we can assume that
\[
	\|T[x_n] - P[x_n]\|_X \leq 2^{-n}, \quad n \geq 1.
\]

The presence of $T$ in \eqref{A4} is necessary because $x_n$ might not be in $K$ and $F[x_n]$ might not be well-defined. The map $T$ above also increases the rate of convergence of $(x_n)$ to $x_*$ due to
\beq
	\|T[x_n] - x_*\|_X \leq \|x_n - x_*\|_X + 2^{-n} , \quad n \geq 1.
\eeq{}

The following proposition holds.
\begin{Proposition}
	Assume that $\Dx F[x;\omega]$ is Lipschitz continuous and that, for all $x, h \in K,$
		\beq
			\intome \|\Dx F[x;\omega](h)\|_{Y}^2d \omega \geq  c\|h\|_X^2.
		\eeq{A6} Then the sequence defined in \eqref{A4} converges to $x_*$ provided that $x_0$ is a "good" initial guess for $x_*$ and $\mu$ is sufficiently small.
\label{proposition Landweber}
\end{Proposition}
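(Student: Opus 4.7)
The plan is to derive a local linear contraction $\|x_{n+1} - x_*\|_X \le \rho \|T[x_n] - x_*\|_X$ with some $\rho \in (0,1)$ and then propagate it along the iteration, absorbing the geometrically decaying projection error $\|T[x_n] - P[x_n]\|_X \le 2^{-n}$. The overall strategy follows the Hanke--Neubauer--Scherzer framework, but since the coercivity hypothesis \eqref{A6} supplies a lower bound on $\D F$ (rather than the weaker tangential-cone condition used in ill-posed regularization), I can argue by a straightforward energy decrease rather than the more delicate residual-monotonicity argument.

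Setting $y_n := T[x_n]$, the first step is to expand
\[
\|x_{n+1} - x_*\|_X^2 = \|y_n - x_*\|_X^2 - 2\mu \intome \langle \D F[y_n;\omega](y_n - x_*), F[y_n;\omega]\rangle_Y \, d\omega + \mu^2 \Bigl\|\intome \D F[y_n;\omega]^* F[y_n;\omega]\, d\omega\Bigr\|_X^2,
\]
using the adjoint identity to transfer $\D F^*$ back to $\D F$ in the cross term. Exploiting $F[x_*;\omega] = 0$ together with the Lipschitz continuity of $\D F$, I would next write the Taylor-type expansion $F[y_n;\omega] = \D F[y_n;\omega](y_n - x_*) + R_n(\omega)$ with $\|R_n(\omega)\|_Y \le L \|y_n - x_*\|_X^2$ uniformly in $\omega$. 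Substituted into the cross term, this produces the leading dissipative contribution $-2\mu \intome \|\D F[y_n;\omega](y_n - x_*)\|_Y^2\, d\omega$, which by \eqref{A6} is bounded above by $-2\mu c \|y_n - x_*\|_X^2$, together with a cubic-in-error remainder handled by Cauchy--Schwarz and the boundedness of $\D F$ on $K$. The $\mu^2$ term is likewise controlled by $C\mu^2 \|y_n - x_*\|_X^2$ after substituting the same Taylor expansion and using boundedness of $\D F$. Choosing $\mu$ small and restricting to a small initial ball, these collect into the desired contraction, and combining with the projection bookkeeping $\|y_{n+1} - x_*\|_X \le \|x_{n+1} - x_*\|_X + 2^{-(n+1)}$ yields $\|y_n - x_*\|_X \le \rho^n \|y_0 - x_*\|_X + \sum_{k=1}^{n} \rho^{n-k} 2^{-k} \to 0$.

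The hard part will be maintaining the invariance of the local ball throughout the iteration: the cubic-in-error remainder and the $\mu^2$ term must stay subordinate to the coercive leading term at every step, which requires an induction that simultaneously propagates the contraction, the membership $y_{n+1} \in K$ (so that $F[y_{n+1};\omega]$ is defined), and the smallness of $\|y_{n+1} - x_*\|_X$. Calibrating the thresholds on $\mu$ and on the radius of the initial ball so that the approximate-projection errors $2^{-n}$ accumulate harmlessly inside the geometric series and do not push the iterates outside the region where the contraction estimate holds is the delicate bookkeeping core of the argument.
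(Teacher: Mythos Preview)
Your argument is correct and in fact yields a stronger conclusion than the paper's: you obtain an explicit geometric rate of convergence, whereas the paper only shows $\|T[x_n]-x_*\|_X\to 0$. The two proofs differ in how condition \eqref{A6} is deployed. The paper follows the Hanke--Neubauer--Scherzer residual-monotonicity route: it first combines the Lipschitz bound with \eqref{A6} to derive a local tangential-cone-type estimate
\[
\intome \|F[x;\omega]-\D F[x;\omega](x-x_*)\|_Y^2\,d\omega \le C\eta^2 \intome \|F[x;\omega]\|_Y^2\,d\omega,
\]
then shows that $\|x_{n+1}-x_*\|_X^2-\|x_n-x_*\|_X^2\le -c'\intome\|F[T[x_n];\omega]\|_Y^2\,d\omega+2^{-n}$, sums over $n$ to conclude that the residuals $\intome\|F[T[x_n];\omega]\|_Y^2\,d\omega$ are summable and hence tend to zero, and only at the very end invokes \eqref{A6} once more (via the mean-value theorem) to translate vanishing residuals into $\|T[x_n]-x_*\|_X\to 0$. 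Your approach instead feeds \eqref{A6} directly into the energy identity at every step to produce the dissipative term $-2\mu c\|y_n-x_*\|_X^2$, turning the iteration into a strict contraction in the error itself. This is cleaner when the coercivity \eqref{A6} is available (as here), and it delivers a rate; the paper's organization has the advantage of mirroring the standard Landweber analysis for genuinely ill-posed problems, where no lower bound like \eqref{A6} holds and summability of residuals is the most one can extract.
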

\begin{proof} Since $\Dx F[x;\omega]$ is Lipschitz continuous, for
	  all $x$ such that $\|x - x_*\|_X <\eta$ with $\eta$ being a small positive number, we have
		\begin{eqnarray}
			&&\intome \|F[x;\omega] - F[x_*;\omega] - \D F[x;\omega](x - x_*)\|_Y^2 d\omega \nonumber\\&& \hspace{1.8in} \leq C \eta^2 \|x - x_*\|_X^2 \nonumber\\
			&&\hspace{1.8in} \leq C \eta^2 \intome \|F[x;\omega] - F[x_*;\omega]\|_Y^2d \omega \label{local Landweber condition}
		\end{eqnarray}
		for some positive constant $C$.
Note that we have used here \eqref{A6} and the mean-value theorem for the second inequality above.	
		
For all $n \geq 1$, let
\[
\epsilon_n[\omega] =  F[T[x_n];\omega].
\] We have
\begin{eqnarray*}
	&&\hspace*{-.24in}\|x_{n + 1} - x_*\|_X^2 - \|x_n - x_*\|_X^2 - 2^{-n}\\ && \leq
	\|x_{n + 1} - x_*\|_X^2 - \|T[x_n] - x_*\|_X^2 \\
	&&=  2\langle x_{n + 1} - T[x_n], T[x_n] - x_*\rangle_X + \|x_{n + 1} - T[x_n]\|_X^2\\
	&&\leq 2\mu \intome \langle -\Dx F[T[x_n];\omega]^*\epsilon_n[\omega], T[x_n]
	- x_*\rangle_X d\omega \\
		&& \hspace*{.24in}+ \intome \langle \mu \epsilon_n[\omega], \mu \D F[T[x_n];\omega] \D F[ T[x_n]; \omega]^* (\epsilon_n[\omega]) \rangle_Y  d\omega \\
	&& = \intome \langle \epsilon_{n}[\omega], 2\mu \epsilon_n[\omega] -2\mu\D F[T[x_n];\omega] (T[x_n] - x_*)\rangle_Y  d\omega   -  \mu \intome \|\epsilon_n[\omega] \|_Y^2  d\omega \\
		&& \hspace*{.24in}+ \intome \langle \sqrt{\mu} \epsilon_n[\omega], (- I + \mu \D F[T[x_n];\omega]\D F[T[x_n];\omega]^*)) (\sqrt{\mu}\epsilon_n[\omega])\rangle_Y  d\omega \\
		&& \leq 2\mu \left(\intome \|\epsilon_{n}[\omega]\|_Y^2  d\omega\right )^{\frac{1}{2}} \left(\intome \| \epsilon_n[\omega] -\Dx F[T[x_n];\omega] (T[x_n] - x_*)\|_Y^2  d\omega\right)^{\frac{1}{2}} \\
		&& \hspace*{.24in}  -  \mu \intome \|\epsilon_n[\omega]\|_Y^2  d\omega + \intome \langle \sqrt{\mu} \epsilon_n[\omega], (- I + \mu \D F[T[x_n];\omega]\Dx F[T[x_n];\omega]^*)) (\sqrt{\mu}\epsilon_n[
		\omega])\rangle_Y  d\omega \\
		&\leq& \mu (2 \sqrt{C} \eta - 1) \intome\|\epsilon_n[\omega]\|_Y^2  d\omega.
\end{eqnarray*}
Here, we have used (\ref{local Landweber condition}) for the last inequality.
It follows that
\beq
	\|x_{n + 1} - x_*\|_X^2 + \mu(1 - 2 \sqrt{C}\eta) \intome\|\epsilon_n\|_Y^2  d\omega  - 2^{-n} \leq \|x_n - x_*\|_X^2 , \nonumber
\eeq{} and therefore,
\beq
	\sum_{n = 1}^{\infty} \intome \|F[T[x_n];\omega] \|_Y^2 d\omega  \leq \frac{\|x_0 - x_*\|_X^2}{\mu(1 - 2\sqrt{C} \eta)} + 1 . \nonumber
\eeq{}
We now obtain the convergence of $(x_n)$ to $x_*$ using again the mean-value theorem and condition \eqref{A6}:
\beq
	c \|T[x_n] - x_*\|_X^2 \leq \intome \| \Dx F[\tilde x_n; \omega] (T[x_n] - x_*)\|^2_Y d\omega = \intome \|F[T[x_n];\omega] - F[x_*;\omega] \|_Y^2  d\omega \rightarrow 0 \nonumber
\eeq{} for some $\tilde x_n = t T[x_n] + (1 - t)x_*$, $t \in (0, 1)$.
\end{proof}

\end{document}